\newtheorem{prop}{Proposition}[section]
\newtheorem{thm}[prop]{Theorem}
\newtheorem{cor}[prop]{Corollary}
\theoremstyle{remark}
\newtheorem{rem}[prop]{Remark}
\newcommand{\CC}{\mathbb{C}}
\newcommand{\cst}{\mathrm{C}^*}
\newcommand{\tens}{\otimes}
\newcommand{\id}{\mathrm{id}}
\newcommand{\st}{\:\vline\:}
\newcommand{\I}{\mathbb{1}}
\newcommand{\comp}{\!\circ\!}
\newcommand{\cS}{\mathcal{S}}
\newcommand{\cP}{\mathcal{P}}
\newcommand{\cT}{\mathcal{T}}
\newcommand{\cN}{\mathcal{N}}
\newcommand{\cB}{\mathcal{B}}
\newcommand{\cG}{\mathcal{G}}
\newcommand{\bu}{\boldsymbol{u}}
\newcommand{\is}[2]{\left(#1\,\vline\,#2\right)}
\newcommand{\bohr}{\mathfrak{b}}
\renewcommand{\Bar}[1]{\overline{#1}}
\newcommand{\SqO}{\mathrm{S}_q\mathrm{O}(3)}
\DeclareMathOperator{\M}{M}
\DeclareMathOperator{\C}{C}
\DeclareMathOperator{\Mor}{Mor}
\DeclareMathOperator{\AP}{\mathbb{AP}}
\DeclareMathOperator{\qs}{\mathscr{QS}}
\begin{document}

\title{On quantum semigroup actions on finite quantum spaces}

\date{3 October 2008}

\author{Piotr Miko{\l}aj So{\l}tan}
\address{Department of Mathematical Methods in Physics,
Faculty of Physics, University of Warsaw}
\email{piotr.soltan@fuw.edu.pl}

\subjclass[2000]{58B32, 46L85}

\keywords{Quantum group, Quatum semigroup, Action, Invariant state}

\thanks{Research partially supported by KBN grants nos.~N201 1770 33
\& 115/E-343/SPB/6.PRUE/DIE50/2005-2008.}

\begin{abstract}
We show that a continuous action of a quantum semigroup $\mathcal{S}$ on a finite quantum space (finite dimensional $\mathrm{C}^*$-algebra) preserving a faithful state comes from a continuous action of the quantum Bohr compactification $\mathfrak{b}\mathcal{S}$ of $\mathcal{S}$. Using the classification of continuous compact quantum group actions on $M_2$ we give a complete description of all continuous quantum semigroup actions on this quantum space preserving a faithful state.
\end{abstract}

\maketitle

\section{Introduction}

A \emph{quantum space} is an object of the category dual to the category of $\cst$-algebras (\cite[Section 0]{unbo}). A morphism in the category of $\cst$-algebras from $A$ to $B$ is a non-degenerate $*$-homomorphism $\phi:A\to\M(B)$, where $\M(B)$ is the multiplier algebra of $B$. It is known that non-degeneracy of $\phi$ allows an extension of $\phi$ to a map $\M(A)\to\M(B)$ thus providing a way to compose morphisms. For any $\cst$-algebra $A$ we write $\qs(A)$ for the quantum space corresponding to $A$. Following \cite{unbo} we will write $\Mor(A,B)$ for the set of all morphisms from $A$ to $B$.

By a \emph{quantum semigroup} we mean a quantum space $\qs(A)$ endowed with additional structure described by a morphism $\Delta_A\in\Mor(A,A\tens{A})$ such that
\[
(\Delta_A\tens\id)\comp\Delta_A=(\id\tens\Delta_A)\comp\Delta_A.
\]
If $\qs(N)$ is another quantum space (i.e.~if $N$ is a $\cst$-algebra) then an \emph{action} of a quantum semigroup $\cS=(A,\Delta_A)$ on $\qs(N)$ is a morphism $\Psi\in\Mor(N,N\tens{A})$ such that
\[
(\Psi\tens\id)\comp\Psi=(\id\tens\Delta)\comp\Psi.
\]

Let $\cS=(A,\Delta_A)$ be a quantum semigroup and let $\qs(N)$ be a quantum space. An action $\Psi_{\cS}\in\Mor(N,N\tens{A})$ of $\cS$ on $\qs(N)$ is \emph{continuous} if the set
\[
\bigl\{\Psi_{\cS}(m)(\I\tens{a})\st{m\in{N},}\;a\in{A}\bigr\}
\]
is contained and linearly dense in $N\tens{A}$. This condition is also referred to as the \emph{Podle\'s condition.} It was introduced by Piotr Podle\'s in his study of compact quantum group actions (\cite{podPHD,podles}). The term ``continuity'' used here comes from \cite{vaes}.

Let $\cS=(A,\Delta_A)$ and $\cP=(B,\Delta_B)$ be quantum semigroups. Then a \emph{quantum semigroup} morphism from $\cP$ to $\cS$ is a morphism $\Lambda\in\Mor(A,B)$ such that
\[
(\Lambda\tens\Lambda)\comp\Delta_A=\Delta_B\comp\Lambda.
\]

Let $\cS=(A,\Delta_A)$ be a quantum semigroup and let $\qs(N)$ be a quantum space such that $\dim{N}<\infty$ (in the terminology of \cite{pseu,wang} $\qs(N)$ is a \emph{finite quantum space}). Let $\Psi\in\Mor(N,N\tens{A})$ be an action of $\cS$ on $\qs(N)$. Let $\omega$ be a linear functional on $N$. Then, since $\M(N\tens{A})=N\tens\M(A)$, the map
\[
(\omega\tens\id)\comp\Psi:N\to\M(A)
\]
is well defined. We say that $\omega$ is \emph{invariant} for $\Psi$ (or that $\Psi$ \emph{preserves} $\omega$) if
\[
(\omega\tens\id)\Psi(n)=\omega(n)\I
\]
for all $n\in{N}$.

We will now briefly describe the quantum Bohr compactification of a quantum semigroup introduced in \cite{qbohr}. We will freely use the terminology of the theory of compact quantum groups from \cite{cqg}. Let $\cS=(A,\Delta_A)$ be a quantum semigroup. The quantum Bohr compactification $\bohr\cS$ of $\cS$ is a compact quantum group $\bohr\cS=\bigl(\AP(\cS),\Delta_{\AP(\cS)}\bigr)$ together with a morphism $\chi_{\cS}\in\Mor\bigl(\AP(\cS),A\bigr)$ such that for any compact quantum group $\cG=(B,\Delta_B)$ and any quantum semigroup morphism $\Lambda\in\Mor(B,A)$ there exists a unique morphism of compact quantum groups $\bohr\Lambda\in\Mor\bigl(B,\AP(\cS)\bigr)$ such that
\[
\Lambda=\chi_{\cS}\comp\bohr\Lambda.
\]
The passage from $\cS$ to $\bohr\cS$ is a functor from the category of quantum semigroups to its full subcategory of compact quantum groups (\cite{qbohr}). Let us recall that $\AP(\cS)$ is defined in \cite[Proposition 2.13]{qbohr} as the closed linear span of matrix elements of the, so called, \emph{admissible representations} of $\cS$ (\cite[Definition 2.2]{qbohr}).

\section{Actions on finite quantum spaces and the Bohr compactification}

\begin{thm}\label{main}
Let $N$ be a finite dimensional $\cst$-algebra and let $\cS=(A,\Delta_A)$ be a quantum semigroup. Let $\Psi_{\cS}\in\Mor(N,N\tens{A})$ be a continuous action of $\cS$ on $\qs(N)$. Let $\omega$ be a faithful state on $N$ invariant for $\Psi_{\cS}$. Then there exists an action $\bohr\Psi_{\cS}$ of the quantum Bohr compactification $\bohr\cS$ on $\qs(N)$ such that
\begin{equation}\label{tomabyc}
(\id\tens\chi_{\cS})\comp\bohr\Psi_{\cS}=\Psi_{\cS}.
\end{equation}
Moreover $\bohr\Psi_{\cS}$ is continuous and preserves $\omega$.
\end{thm}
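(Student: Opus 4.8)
The plan is to distil from the action a single finite-dimensional \emph{unitary} corepresentation of $\cS$, to recognise the $\cst$-algebra it generates inside $\M(A)$ as a compact quantum group, and then to push the inclusion of that compact quantum group into $\cS$ through the universal property of $\bohr\cS$. First I would fix a basis $(e_1,\dots,e_d)$ of $N$ that is orthonormal for the inner product $(a,b)\mapsto\omega(a^{*}b)$ (available since $\omega$ is faithful) and write $\Psi_{\cS}(e_j)=\sum_i e_i\tens u_{ij}$ with $u_{ij}\in\M(A)$, which is legitimate because $\M(N\tens A)=N\tens\M(A)$. The identity $(\Psi_{\cS}\tens\id)\comp\Psi_{\cS}=(\id\tens\Delta_A)\comp\Psi_{\cS}$ then becomes $\Delta_A(u_{ij})=\sum_k u_{ik}\tens u_{kj}$, so the matrix $u=[u_{ij}]\in\M(\M_d\tens A)$ is a corepresentation of $\cS$; I view it as an adjointable operator on the Hilbert $\cst$-module $A^d\cong N\tens A$.

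The core of the argument, and the only non-formal step, is to prove that $u$ is unitary. Applying $\omega\tens\id$ to $\Psi_{\cS}(e_i^{*}e_j)=\Psi_{\cS}(e_i)^{*}\Psi_{\cS}(e_j)$, using invariance of $\omega$ on the left-hand side and orthonormality of the $e_i$ on the right, gives $\sum_p u_{pi}^{*}u_{pj}=\delta_{ij}\I$, that is $u^{*}u=\I$: the operator $u$ is an isometry. A direct computation identifies the linear span of $\bigl\{\Psi_{\cS}(m)(\I\tens a)\st m\in N,\ a\in A\bigr\}$ with the image $u(A^d)$; since an adjointable isometry has closed range, the Podle\'s condition forces $u(A^d)=A^d$, hence $u$ is onto, and a surjective isometry with $u^{*}u=\I$ also satisfies $uu^{*}=\I$. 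Thus $u$ is unitary (and, along the way, $\Psi_{\cS}$ is unital). I expect this unitarity --- the place where both hypotheses, invariance of the \emph{faithful} state and \emph{continuity}, are genuinely used --- to be the main obstacle; the rest is bookkeeping.

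Let $B\subseteq\M(A)$ be the $\cst$-subalgebra generated by the $u_{ij}$. From $uu^{*}=\I$ one reads off $\I_{\M(A)}\in B$, and $\Delta_A$ restricts to a unital morphism $\Delta_B\colon B\to B\tens B$ with $\Delta_B(u_{ij})=\sum_k u_{ik}\tens u_{kj}$. Since $u$ is unitary, hence invertible, and $\bar u=[u_{ij}^{*}]$ is invertible (its adjoint is the transpose $u^{\mathsf{T}}$, which is invertible because $u$ is), $(B,u)$ is a compact matrix quantum group; write $\cG=(B,\Delta_B)$. As $\I_B=\I_{\M(A)}$, the inclusion $\iota\colon B\hookrightarrow\M(A)$ is a morphism in $\Mor(B,A)$ which intertwines $\Delta_B$ with $\Delta_A$ by construction, so it is a quantum semigroup morphism. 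The universal property of $\bohr\cS$ then supplies a compact quantum group morphism $\widehat\iota\in\Mor(B,\AP(\cS))$ with $\chi_{\cS}\comp\widehat\iota=\iota$.

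Finally I would put $\bohr\Psi_{\cS}:=(\id\tens\widehat\iota)\comp\Psi_{\cS}$, with $\Psi_{\cS}$ first corestricted to $N\tens B$. Then $(\id\tens\chi_{\cS})\comp\bohr\Psi_{\cS}=(\id\tens\iota)\comp\Psi_{\cS}=\Psi_{\cS}$, which is \eqref{tomabyc}. Writing $\widehat u:=(\id\tens\widehat\iota)(u)$ one has $\Delta_{\AP(\cS)}(\widehat u_{ij})=\sum_k\widehat u_{ik}\tens\widehat u_{kj}$, so $\bohr\Psi_{\cS}$ satisfies the coaction identity and is an action of $\bohr\cS$; moreover $\widehat u$ is unitary, being the image of the unitary $u$ under the $*$-homomorphism $\widehat\iota$. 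From $\widehat u\widehat u^{*}=\I$ one obtains $e_j\tens c=\sum_k\bohr\Psi_{\cS}(e_k)(\I\tens\widehat u_{jk}^{*}c)$ for every $j$ and every $c\in\AP(\cS)$, which shows at once that $\bohr\Psi_{\cS}$ is a (non-degenerate) morphism and that it is continuous. Invariance of $\omega$ for $\bohr\Psi_{\cS}$ follows by applying the unital homomorphism $\widehat\iota$ to the identity $\sum_i\omega(e_i)u_{ij}=\omega(e_j)\I$, which holds in $B$ precisely because $\omega$ is invariant for $\Psi_{\cS}$. (One notes in passing that the argument exhibits $u$ as an admissible representation of $\cS$.)
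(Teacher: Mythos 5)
Your proposal follows the paper's strategy up to and including the unitarity of $u$ (the computation $u^*u=\I$ from invariance of the faithful state, and surjectivity from the Podle\'s condition are exactly the paper's argument). The genuine gap is the parenthetical claim that $\bu^\top$ ``is invertible because $\bu$ is''. Over a noncommutative $\cst$-algebra the transpose of an invertible --- even unitary --- matrix need not be invertible, because $(vw)^\top\neq w^\top v^\top$ when the entries do not commute. Concretely, if $s$ is the unilateral shift, then
\[
\begin{bmatrix} s & \I-ss^*\\ 0 & s^*\end{bmatrix}
\]
is unitary, while its transpose $\begin{bmatrix} s & 0\\ \I-ss^* & s^*\end{bmatrix}$ has no right inverse (the $(1,1)$ entry of such an inverse would make $s$ right invertible). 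Invertibility of $\Bar{\bu}$ (equivalently of $\bu^\top=(\Bar{\bu})^*$) is precisely what your construction needs for $(B,u)$ to be a compact matrix quantum group in Woronowicz's sense --- unitarity of $u$ alone does not suffice when the ambient object is only a quantum semigroup --- and it is also exactly the ``admissibility'' of $\bu$ that drives the paper's proof. So the one step you dismiss in a parenthesis is the main analytic content of the theorem beyond unitarity.

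The paper closes this gap by using the $*$-structure of $N$, which your argument never exploits after the unitarity step: writing $e_i^*=\sum_p\tau_{p,i}e_p$ with $\cT=(\tau_{k,l})$ an invertible scalar matrix and computing $\Psi_{\cS}(e_j^*)$ in two ways (as $\Psi_{\cS}(e_j)^*$ and by expanding $e_j^*$ in the basis) yields $(\cT\tens\I)\Bar{\bu}=\bu(\cT\tens\I)$, whence $\Bar{\bu}$ is invertible because $\bu$ is. Once this is supplied, the remainder of your route is correct, though somewhat longer than necessary: the paper does not pass through the $\cst$-algebra $B$ generated by the $u_{i,j}$ and the universal property of $\bohr\cS$, but simply observes that admissibility of $\bu$ places all $u_{i,j}$ in $\AP(\cS)$ by the very definition of $\AP(\cS)$ as the closed span of matrix elements of admissible representations, so that $\bohr\Psi_{\cS}$ is $\Psi_{\cS}$ corestricted to $N\tens\AP(\cS)$; the verifications of \eqref{tomabyc}, of the coaction identity, of continuity via $\bu\bu^*=\I$, and of invariance of $\omega$ are then the same as yours.
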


The state $\omega$ defines on $N$ a scalar product $\is{\cdot}{\cdot}_{\omega}$ given by
\[
\is{x}{y}_\omega=\omega(x^*y)
\]
for all $x,y\in{N}$.

\begin{proof}[Proof of Theorem \ref{main}]
Let $\{e_1,\ldots,e_n\}$ be a basis of $N$ which is orthonormal for the scalar product $\is{\cdot}{\cdot}_\omega$. Let $\bu$ be the $n\times{n}$ matrix of elements $(u_{i,j})_{i,j=1,\ldots,n}$ of $\M(A)$ defined by
\[
\Psi_{\cS}(e_j)=\sum_{i=1}^ne_i\tens{u_{i,j}}.
\]
Then $\bu$ is a finite dimensional representation of $\cS$. We have
\[
\bu^*\bu=\I
\]
because the basis $\{e_1,\ldots,e_n\}$ is orthonormal for $\is{\cdot}{\cdot}_\omega$. Now the Podle\'s condition for $\Psi_{\cS}$ means that $\bu$ can be considered as a map $A^n\to{A^n}$. As such a map it is an isometry (of the Hilbert module $A^n$) and has dense range. Thus $\bu$ is unitary. In particular $\bu$ is invertible.

Let $\Bar{\bu}$ be the matrix obtained from $\bu$ by taking adjoint of each matrix entry of (but not transposing the matrix). We will show that $\Bar{\bu}$ is invertible.

Let $\cT$ be the (invertible) matrix $(\tau_{k,l})_{k,l=1,\ldots,n}$ such that
\[
e_i^*=\sum_{p=1}^n\tau_{p,i}e_p.
\]
Then
\[
\Psi_{\cS}(e_j^*)=\sum_{i=1}^ne_i^*\tens{u_{ij}^*}=\sum_{i,p=1}^n\tau_{p,i}e_p\tens{u_{i,j}^*}.
\]
On the other hand
\[
\Psi_{\cS}(e_j^*)=\sum_{i=1}^n\tau_{i,j}\Phi(e_i)=\sum_{p,i=1}^n\tau_{i,j}e_p\tens{u_{p,i}}.
\]
Therefore for each $p$ and $j$ we have
\[
\sum_{i=1}^n\tau_{p,i}u_{i,j}^*=
\sum_{i=1}^nu_{p,i}\tau_{i,j}.
\]
This means that
\[
(\cT\tens\I)\Bar{\bu}=\bu(\cT\tens\I).
\]
Therefore $\Bar{\bu}$ is invertible if and only if $\bu$ is invertible and the latter fact is already established. Let $\bu^\top$ be the transpose matrix of $\bu$. Then $\bu^\top=(\Bar{\bu})^*$, so $\bu^\top$ also is invertible. In other words, in the terminology of \cite{qbohr}, the matrix $\bu$ is a finite dimensional admissible representation of $\cS$.

This means that the image of the map $\Psi_{\cS}$ is contained in $N\tens\AP(\cS)\subset{N\tens\M(A)}$. Let $\bohr\Psi_{\cS}$ be the map $\Psi_{\cS}$ considered as a mapping from $N$ to $N\tens\AP(\cS)$.

It is clear that $\Psi_{\cS}$ is a morphism (it is a unital map of unital $\cst$-algebras) and satisfies \eqref{tomabyc}. Also it is easy to see that $\bohr\Psi_{\cS}$ is an action of $\bohr\cS$ on $\qs(M_2)$ preserving $\omega$. To see that $\bohr\Psi_{\cS}$ is continuous note that from $\bu\bu^*=\I$ it follows that
\[
\sum_{j=1}^n\bohr\Psi_{\cS}(e_j)(\I\tens{u_{i,j}^*})=e_i\tens\I.
\]
Thus the Podle\'s condition is satisfied for $\bohr\Psi_{\cS}$.
\end{proof}

\begin{rem}\label{remerg}
Let $\cS$, $N$, $\omega$ and $\Psi_{\cS}$ be as in Theorem \ref{main}. Assume further that the action of $\cS$ on $\qs(N)$ is \emph{ergodic}, i.e.~that for any $n\in{N}$ the condition that $\Psi_{\cS}(n)=n\tens\I$ implies $n\in\CC\I$. Then the associated action $\bohr\Psi_{\cS}$ of the quantum Bohr compactification of $\cS$ on $\qs(N)$ is ergodic as well.
\end{rem}

\begin{prop}\label{wazne}
Let $N$ be a finite dimensional $\cst$-algebra and let $\cG=(B,\Delta_B)$ be a compact quantum group. Let $\Phi_{\cG}\in\Mor(N,N\tens{B})$ be a continuous action of $\cG$ on $\qs(N)$. Then there exists a faithful state on $N$ invariant for $\Phi_{\cG}$.
\end{prop}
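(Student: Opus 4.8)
The plan is to produce the invariant state by averaging an arbitrary faithful state over the compact quantum group $\cG$ using its Haar state $h$. Concretely, pick any faithful state $\rho$ on $N$ (such a state exists because $N$ is finite dimensional, e.g.\ the trace coming from a faithful representation). Define a new functional $\omega$ on $N$ by
\[
\omega(n)=(\rho\tens{h})\bigl(\Phi_{\cG}(n)\bigr).
\]
Since $\Phi_{\cG}$ maps into $N\tens{B}$ and $h$ is a state on $B$, this is a well-defined state on $N$ (positivity and normalization are immediate from those of $\rho$ and $h$, together with $\Phi_{\cG}(\I)=\I\tens\I$). The first main step is to check invariance: using coassociativity of $\Phi_{\cG}$, i.e.\ $(\Phi_{\cG}\tens\id)\comp\Phi_{\cG}=(\id\tens\Delta_B)\comp\Phi_{\cG}$, together with the bi-invariance property of the Haar state $(\id\tens{h})\comp\Delta_B=(h\tens\id)\comp\Delta_B=h(\cdot)\I$, one computes
\[
(\omega\tens\id)\Phi_{\cG}(n)=(\rho\tens{h}\tens\id)(\Phi_{\cG}\tens\id)\Phi_{\cG}(n)
=(\rho\tens{h}\tens\id)(\id\tens\Delta_B)\Phi_{\cG}(n)=\omega(n)\I,
\]
so $\omega$ is invariant.

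The harder step, and the one where continuity of the action enters, is showing that $\omega$ is \emph{faithful}. This is exactly the point at which the Podle\'s condition is needed: without it $\omega$ could degenerate (the averaged state might annihilate part of $N$). The strategy here is standard in Podle\'s's theory of ergodic/continuous actions. Suppose $n\in{N}$ is positive with $\omega(n)=0$; we must show $n=0$. From $\omega(n)=0$ and positivity of $\rho\tens{h}$ one gets $(\id\tens{h})\bigl(\Phi_{\cG}(n)\bigr)=0$ after using faithfulness of $\rho$ on $N$, hence $h\bigl(x^*\,(\id\tens{h})(\Phi_{\cG}(n))\,x\bigr)$ considerations; more precisely, writing $\Phi_{\cG}(n^{1/2})=y$ one has $\omega(n)=(\rho\tens{h})(y^*y)=0$, and faithfulness of $\rho$ forces $(\id\tens{h})(y^*y)=0$. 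Now invoke the Podle\'s density condition: the linear span of $\{\Phi_{\cG}(m)(\I\tens{b}):m\in{N},\,b\in{B}\}$ is dense in $N\tens{B}$, so the functional $z\mapsto h(\text{slice of }y^*z)$ being zero on all such elements, combined with faithfulness of $h$ on the $\cst$-algebra generated by the matrix coefficients of the action, yields $y=\Phi_{\cG}(n^{1/2})=0$, whence $n^{1/2}=0$ since $\Phi_{\cG}$ is injective (it is a unital $*$-homomorphism into $N\tens{B}$, hence isometric). Therefore $n=0$ and $\omega$ is faithful.

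I expect the faithfulness argument to be the main obstacle, since it is where one must exploit the Podle\'s condition nontrivially; an alternative, perhaps cleaner route is to decompose $N\tens{B}$ into spectral subspaces of the action and observe that $\Phi_{\cG}$ restricted to the fixed-point algebra together with the density condition shows that the GNS representation of $\omega$ has no kernel. Either way, the compact quantum group structure is used only through the existence and bi-invariance of the Haar state $h$, and continuity is used only through the Podle\'s density condition; these two ingredients are precisely what distinguishes this setting from the general quantum semigroup case treated in Theorem \ref{main}.
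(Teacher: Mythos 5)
Your overall strategy (average a faithful state $\rho$ over the Haar state, $\omega=(\rho\tens h)\comp\Phi_{\cG}$) is the same as the paper's, and your invariance computation is fine. The gap is exactly at the faithfulness step, and it is twofold. First, you invoke ``faithfulness of $h$ on the $\cst$-algebra generated by the matrix coefficients of the action''; this is false in general. The Haar state of a compact quantum group need not be faithful on the ambient $\cst$-algebra (e.g.\ the universal $\cst$-algebra of a non-amenable discrete group, viewed as a compact quantum group, has a non-faithful Haar state), and the Podle\'s condition does not repair this: it is a density condition on the action, not a statement about $h$. It is faithful only on the dense Hopf $*$-subalgebra of matrix coefficients (equivalently, on the reduced $\cst$-algebra), and your argument never places $y=\Phi_{\cG}(n^{1/2})$ inside $N$ tensored with that subalgebra. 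The paper resolves precisely this point by passing to the reduced quantum group $\cG_r$, where the Haar state \emph{is} faithful, proving faithfulness and invariance of $\omega$ there, and then pulling the state back to the original action using the Podle\'s--M\"uller theorem: continuity plus $\dim N<\infty$ forces $\Phi_{\cG}(N)\subset N\tens\cB$ with $\cB$ the Hopf $*$-algebra, on which the reducing morphism is the identity, so $\Phi_{\cG_r}(n)=\Phi_{\cG}(n)$.

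Second, your claim that $\Phi_{\cG}$ is injective ``since it is a unital $*$-homomorphism, hence isometric'' is a non sequitur: unital $*$-homomorphisms can have kernels (isometry is equivalent to injectivity, which is what you are trying to prove), and injectivity of the action map is a genuine ingredient of Boca's faithfulness argument for the conditional expectation $E=(\id\tens h)\comp\Phi_{\cG}$. The paper obtains it nontrivially: by Podle\'s--M\"uller the action restricts to a coaction of the Hopf $*$-algebra $\cB$ on a dense subalgebra of $N$, which by finite dimensionality is all of $N$, and then applying the counit of $\cB$ gives $\ker\Phi_{\cG_r}=\{0\}$. Your proof could be repaired along these lines (show $\Phi_{\cG}(N)\subset N\tens\cB$ and injectivity via the counit, then use faithfulness of $h$ on $\cB$ and of $\rho$ on $N$ to conclude $(\rho\tens h)(y^*y)>0$ for $y\neq 0$), but as written both the faithfulness of $h$ and the injectivity of $\Phi_{\cG}$ are asserted where they actually require the continuity hypothesis and the algebraic (or reduced) picture to justify.
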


\begin{proof}
It is noted in \cite[Remark 2 after Lemma 4]{boca} that if $\Phi_{\cG}$ is an injective and the Haar measure $h_{\cG}$ of $\cG$ is faithful then the map
\[
E:N\ni{n}\longmapsto(\id\tens{h_{\cG}})\Phi_{\cG}(n)\in{N}
\]
is faithful. Therefore if $\varphi$ is a faithful state on $N$ then $\omega=\varphi\comp{E}$ also a faithful state. Now $\omega$ is invariant for $\Phi_{\cG}$ (\cite[Section 3]{so3}).

Since $\cG$ might not have a faithful Haar measure, we consider the action of the reduced quantum group $\cG_r=(B_r,\Delta_{B_r})$ (\cite[Page 656]{pseudogr}). If $\lambda\in\Mor(B,B_r)$ is the reducing morphism, we let $\Phi_{\cG_r}=(\id\tens\lambda)\comp\Phi_{\cG}$. Then $\Phi_{\cG_r}$ is an action of $\cG_r$ on $\qs(N)$ which is continuous. Note that the Haar measure of $\cG_r$ is faithful.

By \cite[Therem 6.8]{PodMu} the continuity of $\Phi_{\cG_r}$ guarantees that there exists a subalgebra $\cN$ on $N$ such that $\Phi_{\cG_r}$ restricted to $\cN$ is a coaction of the Hopf $*$-algebra $\cB\subset{B_r}$ on $\cN$ and $\cN$ is dense in $N$. Since $N$ is finite dimensional, we have $\cN=N$ and consequently, using the counit of $\cB$, we immediately find that $\ker{\Phi_{\cG_r}}=\{0\}$.

So far we have shown that there is a faithful state $\omega$ on $N$ invariant for $\Phi_{\cG_r}$ (because the Haar measure of the reduced group is faithful). All that remains it so note that $\omega$ is also invariant for $\Phi_{\cG}$. Indeed, for $n\in{N}$ the element $\Phi_{\cG}(n)$ belongs to $N\tens\cB$. Since the reducing map $\lambda$ is the identity on $\cB$ we have $\Phi_{\cG_r}(n)=\Phi_{\cG}(n)$ for each $n$ and
\[
(\omega\tens\id)\Phi_{\cG}(n)=\omega(n)\I.
\]
\end{proof}

\begin{thm}
Let $\cS=(A,\Delta_A)$ be a quantum semigroup and let $N$ be a finite dimensional $\cst$-algebra. Let $\Psi_{\cS}\in\Mor(N,N\tens{A})$ be a continuous action of $\cS$ on $\qs(N)$. Then $\Psi_{\cS}$ preserves a faithful state if and only if there is an action $\bohr\Psi_{\cS}$ of $\bohr\cS$ on $\qs(N)$ such that
\begin{equation}\label{to}
(\id\tens\chi_{\cS})\comp\bohr\Psi_{\cS}=\Psi_{\cS}.
\end{equation}
\end{thm}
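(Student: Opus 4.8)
The plan is to prove the two implications separately, with Theorem~\ref{main} doing most of the work in one direction and Proposition~\ref{wazne} doing most of the work in the other.

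First, assume $\Psi_{\cS}$ preserves a faithful state $\omega$. Then this is precisely the situation of Theorem~\ref{main}, which directly produces an action $\bohr\Psi_{\cS}$ of $\bohr\cS$ on $\qs(N)$ satisfying \eqref{to} (this is exactly equation \eqref{tomabyc}). So this direction is immediate.

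For the converse, suppose there is an action $\bohr\Psi_{\cS}$ of $\bohr\cS=\bigl(\AP(\cS),\Delta_{\AP(\cS)}\bigr)$ on $\qs(N)$ with $(\id\tens\chi_{\cS})\comp\bohr\Psi_{\cS}=\Psi_{\cS}$. The first step would be to check that $\bohr\Psi_{\cS}$ is itself continuous: since $\chi_{\cS}$ is a morphism and $\Psi_{\cS}$ satisfies the Podle\'s condition, the linear density of $\{\Psi_{\cS}(m)(\I\tens a)\}$ in $N\tens A$ should be pushed back to give linear density of $\{\bohr\Psi_{\cS}(m)(\I\tens b)\}$ in $N\tens\AP(\cS)$; one uses that $\chi_{\cS}(\AP(\cS))$ generates (a dense subalgebra inside the multipliers of) $A$ together with the fact that $N$ is finite-dimensional so tensoring with $N$ is exact and behaves well with closures. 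Then, since $\bohr\cS$ is a compact quantum group and $\bohr\Psi_{\cS}$ is a continuous action of it on the finite-dimensional $\cst$-algebra $N$, Proposition~\ref{wazne} applies and yields a faithful state $\omega$ on $N$ invariant for $\bohr\Psi_{\cS}$. Finally one transports invariance along $\chi_{\cS}$: applying $\id\tens\chi_{\cS}$ to the identity $(\omega\tens\id)\bohr\Psi_{\cS}(n)=\omega(n)\I$ and using \eqref{to} together with $\chi_{\cS}(\I)=\I$ (as $\chi_{\cS}$ is unital, being a morphism of unital algebras) gives $(\omega\tens\id)\Psi_{\cS}(n)=\omega(n)\I$, so $\omega$ is invariant for $\Psi_{\cS}$.

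The main obstacle I anticipate is verifying that $\bohr\Psi_{\cS}$ is continuous given only continuity of $\Psi_{\cS}$ and the intertwining relation \eqref{to}. One must be slightly careful because $\chi_{\cS}$ need not be injective and its range sits inside $\M(A)$ rather than $A$; the argument has to show that the image of $\bohr\Psi_{\cS}$ already lies in $N\tens\AP(\cS)$ with the correct density, rather than merely in $N\tens\M(\AP(\cS))$. Here finite-dimensionality of $N$ is essential, exactly as in the proof of Theorem~\ref{main}, and one can borrow that argument: write $\bohr\Psi_{\cS}(e_j)=\sum_i e_i\tens v_{i,j}$ with $v_{i,j}\in\M(\AP(\cS))$, note that $(\id\tens\chi_{\cS})$ sends $\bu$-type matrix $(v_{i,j})$ to the unitary admissible representation $\bu$, and deduce the needed invertibility/density properties of $(v_{i,j})$ from those of $\bu$. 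Once continuity is in hand, everything else is a routine application of the already-established results.
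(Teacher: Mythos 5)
Your proposal follows the paper's proof exactly: the ``only if'' part is Theorem~\ref{main}, and the ``if'' part observes that $\bohr\Psi_{\cS}$ is continuous, applies Proposition~\ref{wazne} to obtain a faithful state invariant for $\bohr\Psi_{\cS}$, and transports invariance to $\Psi_{\cS}$ via \eqref{to}. The only divergence is that the paper simply asserts the continuity of $\bohr\Psi_{\cS}$, whereas you try to justify it; note that your ``push the density back along $\chi_{\cS}$'' sketch is not a proof as stated (linear density in $N\tens{A}$ does not pull back through $\id\tens\chi_{\cS}$, which is neither injective nor surjective onto $A$ in general), while your second suggestion --- exploiting that $\chi_{\cS}$ is the embedding of $\AP(\cS)\subset\M(A)$ and analysing the matrix of $\bohr\Psi_{\cS}$ as in the proof of Theorem~\ref{main} --- is much closer to what an honest verification of this step requires.
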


\begin{proof}
Theorem \ref{main} provides the ``only if'' and we only need to prove the ``if'' part. If $\bohr\Psi_{\cS}$ is an action of $\bohr\cS$ on $\qs(N)$ satisfying \eqref{to}, then $\bohr\Phi_{\cS}$
is continuous. Therefore, by Proposition \ref{wazne} there is a faithful state $\omega$ on $N$ invariant for $\bohr\Psi_{\cS}$. Clearly $\omega$ is also invariant for $\Psi_{\cS}$.
\end{proof}

\section{Quantum semigroup actions on $\qs(M_2)$}

We shall now specify the situation to the case $N=M_2$. We will give a complete classification of all continuous quantum semigroup actions preserving a faithful state on $\qs(M_2)$. To that end we will use Theorem \ref{main} to reduce any such action to a continuous action of a compact quantum group. Then we will simply use the classification of such actions given in \cite{so3}.

Now for $q\in]0,1]$ let $\SqO=\bigl(\C(\SqO),\Delta_{\SqO}\bigr)$ be the quantum $\mathrm{SO}(3)$ group of Podle\'s (\cite[Remark 3]{spheres}, \cite[Section 3]{podles} and \cite[Sections 3 \& 4]{so3}). For each $q$ there is an action $\Psi_q\in\Mor\bigl(M_2,M_2\tens\C(\SqO)\bigr)$ of $\SqO$ on $\qs(M_2)$ coming from the standard action of $\mathrm{S}_q\mathrm{U}(2)$ on $\qs(M_2)$ (\cite[Subsection 3.2]{so3}, see also \cite[Lemma 2.1]{izumi}).

Quantum $\mathrm{SO}(3)$ groups can be defined as universal compact quantum groups acting on $\qs(M_2)$ and preserving the Powers state
\[
\omega_q:M_2\ni\begin{bmatrix}a&b\\c&d\end{bmatrix}\longmapsto\tfrac{1}{1+q^2}(a+q^2d)
\]
(cf.~\cite{so3}). Up to conjugation by a unitary $2\times{2}$ matrix these are all the faithful states on $M_2$. This fact made it possible to give in \cite{so3} a general theorem classifying all compact quantum group actions on $M_2$ (\cite[Thorem 6.1]{so3}). Combining this result with Theorem \ref{main} yields the following:

\begin{thm}\label{drugie}
Let $\cS=(A,\Delta_A)$ be a quantum semigroup. Let $\Psi_{\cS}\in\Mor(M_2,M_2\tens{A})$ be a continuous action of $\cS$ on $\qs(M_2)$ preserving a faithful state. Then there exists a number $q\in]0,1]$, a unitary $u\in{M_2}$ and a morphism $\Gamma\in\Mor\bigl(\C(\SqO),A\bigr)$ such that
\[
\Psi_{\cS}(m)=(\id\tens\Gamma)\bigl((u\tens\I)\Psi_q(u^*mu)(u^*\tens\I)\bigr).
\]
for all $m\in{N}$. Moreover $\Gamma$ and $u$ are unique for each $q$.
\end{thm}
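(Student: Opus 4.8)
\textbf{Proof proposal for Theorem \ref{drugie}.}

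The plan is to combine Theorem \ref{main} with the classification of compact quantum group actions on $\qs(M_2)$ from \cite{so3} and then transport the resulting factorization back through the universal property of the quantum Bohr compactification. First I would invoke Theorem \ref{main}: since $\Psi_{\cS}$ is a continuous action of $\cS$ preserving a faithful state $\omega$ on $M_2$, there is a continuous action $\bohr\Psi_{\cS}$ of the compact quantum group $\bohr\cS=\bigl(\AP(\cS),\Delta_{\AP(\cS)}\bigr)$ on $\qs(M_2)$, preserving $\omega$, and satisfying $(\id\tens\chi_{\cS})\comp\bohr\Psi_{\cS}=\Psi_{\cS}$. Thus the problem is reduced to the compact case for $\bohr\Psi_{\cS}$.

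Next I would apply \cite[Theorem 6.1]{so3} to the continuous compact quantum group action $\bohr\Psi_{\cS}$ on $\qs(M_2)$. By the remarks preceding the statement, every faithful state on $M_2$ is, up to conjugation by a unitary $u\in M_2$, a Powers state $\omega_q$ for a unique $q\in\,]0,1]$; and the cited classification then yields a unique morphism $\Gamma_0\in\Mor\bigl(\C(\SqO),\AP(\cS)\bigr)$ with
\[
\bohr\Psi_{\cS}(m)=(\id\tens\Gamma_0)\bigl((u\tens\I)\Psi_q(u^*mu)(u^*\tens\I)\bigr)
\]
for all $m\in M_2$, the map $\Gamma_0$ being a morphism of compact quantum groups, i.e.\ a quantum semigroup morphism from $\SqO$ to $\bohr\cS$. (One subtlety I would address here is that the universal property in \cite{so3} produces $\Gamma_0$ into the \emph{universal} version of $\SqO$; since $\C(\SqO)$ as used in the present paper is that universal $\cst$-algebra, this is exactly what is wanted.) Now set $\Gamma=\chi_{\cS}\comp\Gamma_0\in\Mor\bigl(\C(\SqO),A\bigr)$. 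Applying $\id\tens\chi_{\cS}$ to the displayed equation and using $(\id\tens\chi_{\cS})\comp\bohr\Psi_{\cS}=\Psi_{\cS}$ together with $(\id\tens\chi_{\cS})\comp(\id\tens\Gamma_0)=\id\tens\Gamma$ gives precisely
\[
\Psi_{\cS}(m)=(\id\tens\Gamma)\bigl((u\tens\I)\Psi_q(u^*mu)(u^*\tens\I)\bigr),
\]
as required.

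For uniqueness of $q$, $u$ and $\Gamma$ for each $q$: the number $q$ is determined because the invariant faithful state $\omega$ for $\Psi_{\cS}$ is, up to unitary conjugation, a unique $\omega_q$, and the modular data of $\omega_q$ for distinct $q\in\,]0,1]$ are non-conjugate. Given $q$, the unitary $u$ is determined up to the stabilizer of $\omega_q$; the precise normalization making $u$ unique is the one already fixed in \cite[Theorem 6.1]{so3}, which I would simply cite. Finally, uniqueness of $\Gamma$ follows from density: the Podle\'s condition for $\Psi_q$ (it is a continuous action) means the matrix elements of the defining representation generate $\C(\SqO)$, so any two morphisms $\Gamma,\Gamma'$ giving the same $\Psi_{\cS}$ must agree on these generators and hence coincide. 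The main obstacle I anticipate is bookkeeping at the interface between the universal object $\C(\SqO)$ used here and whatever model of $\SqO$ is used in \cite{so3}, together with making the uniqueness-of-$u$ normalization consistent across the two references; the rest is a direct concatenation of Theorem \ref{main} with \cite[Theorem 6.1]{so3}.
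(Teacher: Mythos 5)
Your proposal is essentially the paper's own argument: reduce to the compact case via Theorem \ref{main}, transform the invariant faithful state to a Powers state $\omega_q$ by a unitary $u$, invoke the universal property/classification of \cite[Theorem 6.1]{so3} to obtain $\Gamma_0\in\Mor\bigl(\C(\SqO),\AP(\cS)\bigr)$, and set $\Gamma=\chi_{\cS}\comp\Gamma_0$. The only caveat is your side remark on uniqueness of $q$: the theorem claims uniqueness only of $u$ and $\Gamma$ for each $q$, and without ergodicity the invariant faithful state (hence $q$) need not be unique, which is exactly why that stronger statement is reserved for the Corollary.
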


Theorem \ref{drugie} is just a restatement of the universal property of Podle\'s quantum $\mathrm{SO}(3)$ groups. Namely, if $\omega$ is a faithful state on $M_2$, then there exist a unique $q$ and a unique $u$ (as in the theorem) such that $\omega(uxu^*)=\omega_q(x)$ for all $x\in{M_2}$. Therefore if $\Psi_{\cS}$ is a continuous action of $\cS=(A,\Delta_A)$ on $\qs(M_2)$ preserving $\omega$ then the associated action $\bohr\Psi_{\cS}$ of $\bohr\cS$ on $\qs(M_2)$ also preserves $\omega$. One can check that the mapping
\[
M_2\ni{m}\longmapsto(u^*\tens\I)\bohr\Psi_{\cS}(umu^*)(u\tens\I)\in{M_2}\tens\AP(\cS)
\]
is a continuous action of $\bohr\cS$ on $\qs(M_2)$ preserving $\omega_q$. The universal property of the Podle\'s groups now guarantees the existence of a unique $\Gamma_0\in\Mor\bigl(\C(\SqO),\AP(\cS)\bigr)$ intertwining this action with $\Psi_q$ (cf.~\cite{so3}). The morphism $\Gamma$ in Theorem \ref{drugie} is simply the composition of $\Gamma_0$ with $\chi_{\cS}$.

By Remark \ref{remerg}, if $\Psi_{\cS}$ is ergodic, then so is $\bohr\Psi_{\cS}$. Therefore the invariant state is unique (\cite[Lemma 4]{boca}). Therefore we have:

\begin{cor}
Let $\cS=(A,\Delta_A)$ be a quantum semigroup. Let $\Psi_{\cS}\in\Mor(M_2,M_2\tens{A})$ be a continuous ergodic action of $\cS$ on $\qs(M_2)$ preserving a faithful state. Then there exists a unique number $q\in]0,1]$, a unique unitary $u\in{M_2}$ and a unique morphism $\Gamma\in\Mor\bigl(\C(\SqO),A\bigr)$ such that
\[
\Psi_{\cS}(m)=(\id\tens\Gamma)\bigl((u\tens\I)\Psi_q(u^*mu)(u^*\tens\I)\bigr)
\]
for all $m\in{N}$.
\end{cor}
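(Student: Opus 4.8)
The plan is to read both existence and uniqueness off Theorems~\ref{main} and~\ref{drugie}, the only new ingredient being the uniqueness of $q$. Theorem~\ref{drugie} already produces a triple $(q,u,\Gamma)$ realising the displayed factorisation, and its final clause says that for a \emph{fixed} $q$ the unitary $u$ and the morphism $\Gamma$ are uniquely determined; so the whole problem reduces to showing that $q$ cannot be changed. I would obtain this from the fact that, under the ergodicity hypothesis, $\Psi_{\cS}$ preserves \emph{exactly one} faithful state.

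The first step is to note that a factorisation $\Psi_{\cS}(m)=(\id\tens\Gamma)\bigl((u\tens\I)\Psi_q(u^*mu)(u^*\tens\I)\bigr)$ pins down a particular invariant state. Setting $\omega_0(m):=\omega_q(u^*mu)$, so that $\omega_0(uyu^*)=\omega_q(y)$ for all $y\in M_2$, a direct computation using unitality of $\Gamma$ and invariance of $\omega_q$ under $\Psi_q$ gives $(\omega_0\tens\id)\Psi_{\cS}(m)=\Gamma\bigl((\omega_q\tens\id)\Psi_q(u^*mu)\bigr)=\omega_q(u^*mu)\,\I=\omega_0(m)\I$; and $\omega_0$ is faithful since the Powers state $\omega_q$ is faithful for $q\in\,]0,1]$. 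The second step is to show that $\Psi_{\cS}$ has no other faithful invariant state. Here I would pass to the Bohr compactification: by Remark~\ref{remerg} the action $\bohr\Psi_{\cS}$ of $\bohr\cS$ attached to $\Psi_{\cS}$ is ergodic, so by \cite[Lemma~4]{boca} it admits a unique invariant state; and since the construction of $\bohr\Psi_{\cS}$ in the proof of Theorem~\ref{main} --- as the corestriction of $\Psi_{\cS}$ to $M_2\tens\AP(\cS)$ --- does not depend on which invariant state is used, applying Theorem~\ref{main} with $\omega_0$ shows that this same $\bohr\Psi_{\cS}$ preserves $\omega_0$. Hence every faithful $\Psi_{\cS}$-invariant state equals the unique invariant state of $\bohr\Psi_{\cS}$; call it $\omega$.

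Finally, if $(q,u,\Gamma)$ and $(q',u',\Gamma')$ both factor $\Psi_{\cS}$ as in the statement, the first two steps give $\omega(uxu^*)=\omega_q(x)$ and $\omega(u'xu'^*)=\omega_{q'}(x)$ for all $x\in M_2$. The normal form for faithful states on $M_2$ recalled in the discussion after Theorem~\ref{drugie} then forces $q=q'$ and $u=u'$, and the uniqueness clause of Theorem~\ref{drugie} gives $\Gamma=\Gamma'$. The one step that is not purely formal --- the main obstacle --- is the second one: proving that the faithful invariant state is unique; everything else is bookkeeping around Theorems~\ref{main} and~\ref{drugie} and the universal property of the Podle\'s quantum $\mathrm{SO}(3)$ groups.
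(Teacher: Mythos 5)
Your proposal is correct and follows essentially the same route as the paper: ergodicity passes to $\bohr\Psi_{\cS}$ by Remark~\ref{remerg}, Boca's Lemma~4 gives uniqueness of the invariant state, and then the normal form of faithful states on $M_2$ together with the uniqueness clause of Theorem~\ref{drugie} pins down $q$, $u$ and $\Gamma$. You merely spell out the bookkeeping (that the factorisation forces invariance of $\omega_0(\cdot)=\omega_q(u^*\cdot u)$, and that any faithful invariant state of $\Psi_{\cS}$ is automatically invariant for the corestricted action $\bohr\Psi_{\cS}$) that the paper leaves implicit.
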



\begin{thebibliography}{66}
\bibitem{boca}
{\sc F.~Boca:} Ergodic actions of compact matrix pseudogroups on $\mathrm{C}^*$-algebras. In \emph{Recent Advances in Operator algebras.} Ast\'{e}risque \textbf{232} (1995), pp.~93--109.
\bibitem{izumi}
{\sc M.~Izumi:} Non commutative Poisson boundaries and compact quantum group actions. \emph{Adv.~Math.} \textbf{169} (2002), 1--57.
\bibitem{spheres}
{\sc P.~Podle\'s:} Quantum spheres. \emph{Lett.~Math.~Phys.} \textbf{14} (1987), 193--202.
\bibitem{podPHD}
{\sc P.~Podle\'s:} Przestrzenie kwantowe i ich grupy symetrii (Quantum spaces and their symmetry groups). PhD Thesis, Department of Mathematical Methods in Physics, Faculty of Physics, Warsaw University (1989) (in Polish).
\bibitem{podles}
{\sc P.~Podle\'{s}:} Symmetries of quantum spaces. Subgroups and quotient spaces of quantum $\mathrm{SU}(2)$ and $\mathrm{SO}(3)$ groups. \emph{Commun.~Math.~Phys.} \textbf{170} (1995), 1--20.
\bibitem{PodMu}
{\sc P.~Podle\'s \& E.~M\"uller:} Introduction to quantum groups. \emph{Rev.~Math.~Phys.} \textbf{10} no.~4 (1998), 511--551.
\bibitem{qbohr}
{\sc P.M.~So\l{}tan:} Quantum Bohr compactification. \emph{Ill.~J.~Math.} \textbf{49} no.~4 (2005), 1245--1270.
\bibitem{so3}
{\sc P.M.~So\l{}tan:} Quantum $\mathrm{SO}(3)$ groups quantum group actions on $M_2$. arXiv:0810.0398v1 [math.OA]. Submitted to \emph{Journal of Noncommutative Geometry.}
\bibitem{vaes}
{\sc S.~Vaes:} A new approach to induction and imprimitivity results. \emph{J.~Funct.~Anal.} \textbf{229} (2005), 317--374.
\bibitem{wang}
{\sc S.~Wang:} Quantum symmetry groups of finite spaces. \emph{Commun.~Math.~Phys.} \textbf{195} (1998), 195--211.
\bibitem{pseu}
{\sc S.L.~Woronowicz:} Pseudogroups, pseudospaces and Pontryagin duality.
\emph{Proceedings of the International Conference on Mathematical Physics,
Lausanne 1979} Lecture Notes in Physics, \textbf{116}, pp.~407--412.
\bibitem{pseudogr}
{\sc S.L.~Woronowicz:} Compact matrix pseudogroups. \emph{Comm.~Math.~Phys.} \textbf{111} (1987), 613--665.
{\bibitem{unbo}
{\sc S.L.~Woronowicz:} Unbounded elements affiliated with $\mathrm{C}^*$-algebras and non-compact quantum groups. \emph{Commun.~Math.~Phys.} \textbf{136} (1991), 399--432.}
\bibitem{cqg}
{\sc S.L.~Woronowicz:} Compact quantum groups. In: \emph{Sym\'etries
quantiques, les Houches, Session LXIV 1995,} Elsevier 1998, pp.~845--884, Elsevier 1998.
\end{thebibliography}
\end{document}